\renewcommand{\epsilon}{\varepsilon}
\newtheorem{theorem}{Theorem}
\renewcommand{\ell}{x}
\newtheorem{thm}{Theorem}\numberwithin{thm}{section}
\newtheorem{lem}[thm]{Lemma}
\newtheorem{prop}[thm]{Proposition}
\newtheorem{cor}[thm]{Corollary}
\newtheorem*{con2}{Conjecture}
\begin{document}
\begin{center}
\huge{The Diophantine equation $P(x)=\overset{r}{\underset{i=1}{\prod}}H_{n_i}$ }\\[1cm] 
\end{center}
\begin{center}

\large{Sa$\mathrm{\check{s}}$a Novakovi$\mathrm{\acute{c}}$}\\[0,5cm]
{\small January 2026}\\[0,5cm]
\end{center}
%\end{center}
%\begin{center}
%\begin{otherlanguage*}{russian}
%	\emph{ЗА APИАНУ.}	
%\end{otherlanguage*}
%\end{center}
{\small \textbf{Abstract}. 
Naciri proved that for any integer $k\geq2$, the Brocard--Ramanujan equation $n!+1=x^2$ has only finitely many integer solutions, assuming $x\pm1$ is a $k$-free integer or a prime power. In the present paper we prove similar statements for equations of the form $P(x)=\prod_{i=1}^rH_{n_i}$, where $P(x)$ is a polynomial and $H_{n_i}$ are divisible sequences.}
\begin{center}
	\tableofcontents
\end{center}
\section{Introduction}
The theory of Diophantine equations has a long and rich history and has attracted the attention of many mathematicians. In particular, the study of diophantine equations involving factorials have been studied extensively. For example Brocard \cite{BR}, and independently Ramanujan \cite{RA}, asked to find all integer solutions for $n!=x^2-1$. It is still an open problem, known as Brocard's problem, and it is believed that the equation has only three solutions $(x,n)=(5,4), (11,5)$ and $(71,7)$. Overholt \cite{O} observed that a weak form of Szpiro's conjecture implies that Brocard's equation has finitely many integer solutions. Quite recently, Naciri \cite{NA} showed that  $n!+1=x^2$ has only finitely many integer solutions, assuming $x\pm1$ is a $k$-free integer or a prime power. Some further examples of similar equations are:
\begin{itemize}
	\item[1)] $n!=x^k\pm y^k$ and $n!\pm m!=x^k$, see \cite{EO}.
	\item[2)] $\phi(x)=n!$, where $\phi$ is the Euler totient function \cite{FL}.
	\item[3)] $p(x)=m!$, where $p(x)\in\mathbb{Z}[x]$ \cite{L}.
	\item[4)] $\alpha\,m_1!_{S_1}\cdots m_r!_{S_r}=f(n!)$, where $f$ is an arithmetic function and $m_i!_{S_i}$ are certain Bhargava factorials \cite{BN}.
\end{itemize}
For the equations 1) and 4), it was shown that the number of integer solutions is finite. The equation in 3) has finitely many integer solutions, provided the ABC conjecture holds, and 2) does have infinitely many solutions. There are a lot of more diophantine equations involving factorials and polynomials that have been studied and we refer the interested reader to \cite{BN}, \cite{NO} and the references therein. There are also variants of the Brocard--Ramanujan equation that have been studied intensively in the literature. We do not want to give a complete list here and refer for instance to \cite{KL} and \cite{NA1}. Instead, we focus on the paper of Berend and Harmse \cite{BH1}, where the authors studied diophantine equations of the form $P(x)=H_n$, where $H_n$ are certain divisible sequences, namely $H_n=n!$ or $H_n=p_n\#$ or $H_n=[1,...,n]$. In \cite{NO} and \cite{NOO} the author considered the cases where $H_n=bA^nn!$ and $H_n=bA^nn!!$ and showed that under the ABC conjecture the equations $P(x)=\prod_{i=1}^rA_i^{n_i}n_i!$ and $P(x)=\prod_{i=1}^rA_i^{n_i}n_i!!$ have only finitely many integer solutions $(x,n_1,...,n_r)$. So from this perspective, it is reasonable to study, more generally, equations of the form $P(x)=\prod_{i=1}^rH_{n_i}$, where $H_{n_i}$ are arbitrary divisible sequences. In the present paper, we focus on equations of the form $P(x)=\prod_{i=1}^rH_{n_i}$, where the $H_{n_i}$ are allowed to be $A_i^{n_i}n_i!$ or $A_i^{n_i}n_i!!$ or $A_i^{p_i}p_{n_i}\#$ or $A_i^{n_i}p_i\#$ or $A_i^{n_i}n_i\#$ or $A_i^{n_i}[1,...,n_i]$ with fixed positive integers $A_i$. There are certainly other divisible sequences $H_{n_i}$ to consider, but we study only on the mentioned ones, because the arguments and techniques used in the present paper at least work for these cases. 
\begin{theorem}
	Let $P(x)\in \mathbb{Z}[x]$ be an irreducible polynomial over $\mathbb{Q}$ of degree $d\geq 2$. Let $H_{n_i}$ be either $A_i^{n_i}[1,...,n_i], A_i^{n_i}n_i!, A_i^{n_i}n_i!!$ or $A_i^{p_i}p_{n_i}\#$ or $A_i^{n_i}p_i\#$ or $A_i^{n_i}n_i\#$. Here $A_i$ are fixed positive integers. Then the diophantine equation $P(x)=\prod_{i=1}^rH_{n_i}$ has only finitely many integer solutions $(x,n_1,...,n_r)$.
	\end{theorem}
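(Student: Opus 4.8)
The plan is to show that any solution forces $n:=\max_i n_i$ to be bounded; since $r$ is fixed, this leaves finitely many tuples $(n_1,\dots,n_r)$, and for each the equation $P(x)=\prod_i H_{n_i}$ has at most $d$ integral solutions $x$. First I would record the quantitative features shared by all six admissible sequences. Using Legendre's formula for $v_p(n_i!)$ and its analogues for double factorials, least common multiples and the various primorials (the bounded primes dividing the fixed $A_i$ being collected once and for all into a finite set $S_0$), one obtains an explicit valuation profile for $N:=\prod_{i=1}^r H_{n_i}$: every prime $p$ in the range $(n/2,n]$ divides $N$ to order at most $r$ and at least $1$, while the greatest prime factor satisfies $\mathcal P(N)\le C\,n\log n$. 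Together with the growth estimate $\log N\asymp\sum_i g(n_i)$ (where $g(m)\asymp m\log m$ or $g(m)\asymp m$ according to the type of $H$) and $\abs{x}\asymp N^{1/d}$, this yields the crucial smoothness bound $\mathcal P(P(x))=\mathcal P(N)\le(\log\abs{x})^{1+o(1)}$.

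Next I would pass to the number field $K=\mathbb{Q}(\alpha)$, where $\alpha$ is a root of $P$; after scaling $P$ to be monic (again absorbing the leading-coefficient primes into $S_0$) one has $N=\abs{N_{K/\mathbb{Q}}(x-\alpha)}$, so the principal ideal $(x-\alpha)$ is supported entirely on primes of $\mathcal{O}_K$ lying above the finite set $S=\{p\le \mathcal P(N)\}\cup S_0$. The rigid profile from the first step translates into rigidity of this factorisation: each rational prime in $(n/2,n]$ contributes a single degree-one prime of bounded order. When $d\ge 3$ I would then invoke Siegel's identity for three conjugate roots $\alpha_1,\alpha_2,\alpha_3$,
\[
\frac{\alpha_3-\alpha_2}{\alpha_3-\alpha_1}\cdot\frac{x-\alpha_1}{x-\alpha_2}+\frac{\alpha_2-\alpha_1}{\alpha_3-\alpha_1}\cdot\frac{x-\alpha_3}{x-\alpha_2}=1,
\]
which is an $S$-unit equation $aU+bW=1$ with fixed coefficients and with $U,W$ that are $S$-units satisfying $\abs{U-1},\abs{W-1}\asymp\abs{x}^{-1}$ at the archimedean place. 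The case $d=2$ I would treat separately through the Pell/Thue equation attached to $P$, where the smoothness of the target already confines the solutions.

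The main obstacle — and the real content — is that here the set $S$ grows with $n$, so none of the off-the-shelf tools applies directly: the finiteness theorem for $S$-unit equations is for fixed $S$; the effective Győry–Yu/Baker bounds are polynomial in $\mathcal P(N)$ but exponential (resp. polynomial) in $\abs S\asymp\pi(\mathcal P(N))\asymp n/\log n$, hence far larger than the true size $\log\abs{x}\asymp n\log n$; and the unconditional lower bounds for $\mathcal P(P(x))$ have only iterated-logarithm strength, too weak to contradict $\mathcal P(N)\le(\log\abs x)^{1+o(1)}$. My strategy to break this impasse is to exploit the factorial/primorial rigidity rather than a generic smooth target: the extreme archimedean smallness $\abs{U-1}\asymp N^{-1/d}$ must be balanced, through the product formula, against the $p$-adic sizes of $U$, which are \emph{prescribed} by the explicit valuations $v_p(N)$ on the controlled subfamily of primes $p\in(n/2,n]$ — a family supplying $\asymp n/\log n$ essentially independent first-order congruence conditions on $x$. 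These conditions over-determine $x$ once $n$ is large, forcing $n=O_P(1)$. I expect this unconditional step, which is precisely what removes the ABC hypothesis used in the reducible cases of \cite{NO} and \cite{NOO}, to be where essentially all the difficulty lies; for the passage from a single sequence to the product I would rely on the index $i_0$ realising the maximum $n=n_{i_0}$ to supply the large primes in $(n/2,n]$, so that the argument reduces to, and extends, the single-sequence treatment of Berend–Harmse \cite{BH1}.
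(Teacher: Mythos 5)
Your proposal is not a proof, and the gap sits exactly where you locate it yourself: the claim that the $\asymp n/\log n$ congruence conditions coming from primes $p\in(n/2,n]$ ``over-determine'' $x$ is left as an expectation, and the underlying heuristic in fact fails quantitatively. Each such prime only forces $x$ into one of at most $d$ residue classes mod $p$, and by CRT all these conditions are simultaneously satisfiable; they could only conflict with the size of $x$ if the modulus $\prod_{n/2<p\le n}p\asymp e^{n/2}$ (or even $\prod_{p\le n}p\asymp e^{n}$) exceeded $\abs{x}\asymp N^{1/d}\asymp \exp(n\log n/d)$, which it never does. So the $S$-unit/Siegel-identity framework with growing $S$ ends, as you concede, in an impasse that your proposed rigidity argument does not break. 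There is also a factual slip: for $H_{n}=A^{n}n!!$ with $n$ even one has $n!!=2^{n/2}(n/2)!$, so \emph{no} odd prime in $(n/2,n]$ divides it, contradicting the valuation profile you assert for all six sequences.

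The paper's proof is entirely different and essentially one paragraph, and the decisive point is a use of irreducibility that your sketch never makes. By the Chebotarev-type result (Theorem 2.4 of the paper, from Janusz), an irreducible $P\in\mathbb{Z}[x]$ of degree $d\ge 2$ has \emph{no} root modulo $p$ for infinitely many primes $p$. On the other hand, all six sequences are divisible sequences in the strong sense that every fixed prime $p$ divides $H_{n}$ once $n$ is large; hence if $P(x)=\prod_{i=1}^{r}H_{n_i}$ had infinitely many solutions, then (the maximal $n_i$ being unbounded along these solutions, since a fixed tuple $(n_1,\dots,n_r)$ admits at most $d$ values of $x$) every prime $p$ would divide some value $P(x)$, i.e.\ $P(x)\equiv 0\ (\mathrm{mod}\ p)$ would be solvable for every prime $p$ --- a contradiction. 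You use irreducibility only to pass to the number field $\mathbb{Q}(\alpha)$, which is the wrong place to spend it; the root-free primes are what make the problem easy, with no $S$-unit machinery, no ABC, and no case distinction on $d$.
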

\noindent
Theorem 1 elaborates on the case of irreducible polynomials. If $P(x)$ is arbitrary, the problem gets more involved and we are only able to prove finiteness of the set of integer solutions for certain divisible sequences and under the ABC conjecture. So we recall its statement. For a non-zero integer $a$, let $N(a)$ be the \emph{algebraic radical}, namely $N(a)=\prod_{p|a}{p}$.
\begin{con2}[ABC conjecture]
	For any $\epsilon >0$ there is a constant $K(\epsilon)$ depending only on $\epsilon$ such that whenever $A,B$ and $C$  are three coprime and non-zero integers with $A+B=C$, then 
	\begin{eqnarray*}
		\mathrm{max}\{|A|,|B|,|C|\}<K(\epsilon)N(ABC)^{1+\epsilon}
	\end{eqnarray*}
	holds.
\end{con2} 
\begin{theorem}
	Let $P(x)\in\mathbb{Q}[x]$ be a polynomial of degree $d\geq 2$ which is not monomial and has at least two distinct roots. Let $A_i$ be fixed positive integers and let $H_{n_i}$ be 
	\begin{itemize}
		\item[(i)] $A_i^{n_i}n_i!$, $A_i^{n_i}n_i!!$ or 
		\item[(ii)] $A_i^{n_i}n_i\#$ or $A_i^{n_i}[1,...,n_i]$ with $A_i>1$,
	\end{itemize}
	then the ABC conjecture implies that the diophantine equation $P(x)=\prod_{i=1}^rH_{n_i}$ has only finitely many integer solutions $(x,n_1,...,n_r)$.
	\end{theorem}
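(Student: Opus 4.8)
The plan is to reduce everything to the single assertion that $|x|$ stays bounded over all solutions. Once $|x|$ is bounded, the value $\Pi:=\prod_{i=1}^{r}H_{n_i}=P(x)$ is bounded, and since each $H_{n_i}$ tends to infinity with $n_i$, only finitely many tuples $(n_1,\dots,n_r)$ can give a bounded product; hence finitely many solutions. After clearing denominators we may assume $P\in\mathbb{Z}[x]$ (absorbing the denominator into a bounded factor), and in any infinite family of solutions $|x|\to\infty$, so $\Pi\to\infty$ and $\max_i n_i\to\infty$. Thus it suffices to bound $|x|$, which I would do by playing a radical upper bound for the right-hand side against a radical lower bound for $P(x)$.

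The first ingredient is the radical upper bound. Writing $\Pi=B\cdot M$ with $B=\prod_i A_i^{n_i}$ and $M=\prod_i M_i$, where $M_i$ is the factorial, double-factorial, primorial or $\mathrm{lcm}$ part, I would use the Chebyshev estimates $\theta(n)\sim n$, $\psi(n)\sim n$ and $\log n!\sim n\log n$ to show that the algebraic radical satisfies $N(\Pi)\le\Pi^{\,1-c}$ for some constant $c=c(A_i,\text{type})\in(0,1)$, taking $c=\min_i c_i$ to handle the product. In case (i) the factorials alone force $c\to 1$, since $N(n!)=e^{\theta(n)}=(n!)^{o(1)}$ and similarly for $n!!$. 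In case (ii) the parts $M_i$ are (essentially) squarefree, so $N(M_i)\asymp M_i$, and it is precisely the hypothesis $A_i>1$ that makes the perfect-power factor $A_i^{n_i}$ nontrivial, yielding $c_i=\log A_i/(\log A_i+1)>0$.

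The second ingredient is a radical lower bound coming from ABC. I factor $P(x)=a_0\prod_{j=1}^{s}(x-\alpha_j)^{e_j}$ with $s\ge 2$ distinct roots and squarefree kernel $P_0=\prod_j(x-\alpha_j)$. Applying the ABC conjecture to the (pairwise coprime up to bounded resultants) values of the distinct factors of $P$ — the standard route by which ABC produces such estimates, Langevin's theorem — gives $N(P(x))\gg_\epsilon|x|^{\,s-1-\epsilon}$. Since $N(\Pi)=N(P(x))$ and $\Pi=P(x)\asymp|x|^{d}$, combining the two bounds yields $|x|^{\,s-1-\epsilon}\ll|x|^{\,d(1-c)}$, which bounds $|x|$ as soon as the exponents are incompatible, i.e. as soon as $s-1>d(1-c)$, equivalently $c>(d-s+1)/d$.

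The main obstacle is exactly this exponent inequality. In case (i) it is automatic because $c\to 1$. In case (ii), however, $c_i=\log A_i/(\log A_i+1)$ can fall below the threshold $(d-s+1)/d$ when $A_i$ is small and $P$ has few distinct roots relative to its degree, the extreme case being $A_i=2$, $s=2$, $d=2$; there the naive combination does not close. To overcome this I would exploit the rigidity of the perfect-power factor: for each prime $p\mid A_i$ the high valuation $v_p(P(x))\ge(1+o(1))\,n_i\,v_p(A_i)$, together with the fact that distinct factors of $P$ have bounded resultants, forces a single irreducible factor $Q_t$ to satisfy $p^{\,\gg n_i}\mid Q_t(x)$, so that $Q_t(x)$ itself carries a large perfect-power part and obeys $N(Q_t(x))\le|Q_t(x)|^{\,1-c'}$ with an improved exponent $c'$. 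Feeding this sharpened per-factor radical estimate into the ABC triple formed by two coprime factor-values is meant to upgrade the lower bound enough to beat $\Pi^{\,1-c}$ in the remaining regime. This concentration-of-the-power-part step is where the real difficulty lies and where the argument is most delicate.
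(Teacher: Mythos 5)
Your case (i) argument is correct and is, in substance, the paper's own proof unpacked. The paper routes Theorem 2 through its Theorem 2.3 (the Luca--Takeda ABC machine, whose hidden content is exactly the conditional lower bound $N(P(x))\gg_{\epsilon}|x|^{s-1-\epsilon}$ that you attribute to Langevin, $s$ being the number of distinct roots of $P$) together with its Corollary 2.6 (your upper bound $N(\Pi)\leq \Pi^{1-c}$). For $A^{n}n!$ and $A^{n}n!!$ one has $N(H_n)=H_n^{o(1)}$, so $c$ can be taken arbitrarily close to $1$ and the two exponents collide for every $d\geq 2$ and every $s\geq 2$; your treatment of this case is complete, modulo the standard conditional result you cite.

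The gap you flag in case (ii) is genuine, your proposed repair cannot close it, and --- this is worth knowing --- the paper's own proof has the identical gap, concealed inside the citation of Theorem 2.3. What ABC yields through the equation is $N(F)=N(P(x))\gg_{\epsilon}F^{(s-1-\epsilon)/d}$, so the hypothesis that actually suffices is $N(F)^{\kappa}=o(F)$ for some $\kappa>d/(s-1)$, not merely for some $\kappa>1$; as stated, for an arbitrary function $F$, the paper's Theorem 2.3 even admits counterexamples (take $F(n)=2^{n}(2^{n}-1)$, which satisfies $N(F)^{3/2}=o(F)$, while $P(x)=x(x-1)$ equals $F(n)$ at $x=2^{n}$ for every $n$). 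In case (ii) the savings is only $c=\log A_{\min}/(1+\log A_{\min})$, which falls below your (correct) threshold $(d-s+1)/d$ exactly when $d/(s-1)>1+\log A_{\min}$, e.g.\ for all $A_i=2$ and $d=s=2$. In that regime no bookkeeping of radicals can succeed: if $x(x-1)=2^{n}n\#$, the powerful part is already maximally concentrated in one factor (one of $x$, $x-1$ is divisible by $2^{n+1}$, the odd parts are squarefree, and $N(x(x-1))=n\#$), yet the ABC triple $(x-1)+1=x$ only demands $x\ll_{\epsilon}(n\#)^{1+\epsilon}=e^{(1+\epsilon)(1+o(1))n}$, while a solution has $x\asymp e^{(1+\log 2)n/2}$, roughly $e^{0.85n}$: the inequality holds with room to spare. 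The per-factor improvement $c'$ you hope to extract is exactly offset because the total radical $N(P(x))$ is a fixed quantity shared among the factors. So your proposal proves Theorem 2 in case (i), and in case (ii) only under the additional hypothesis $\log A_{\min}/(1+\log A_{\min})>(d-s+1)/d$; the remaining regime is proved neither by your argument nor by the paper's.
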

\noindent
In \cite{DAB} Dabrowski studied the equation $n!+A=x^2$ and showed, among others, that if $A$ is a square, the weak form of Szpiro's conjecture implies that the equation has finitely many integer solutions. We generalize this result a little bit by allowing the left hand side to be a product of some divisible sequences. Let us recall Szpiro's conjecture.
\begin{con2}[Weak form of Szpiro's conjecture]
	There exists some constant $s>0$ such that for mutually prime integers $A,B$ and $C$  with $A+B=C$ the inequality 
	\begin{eqnarray*}
		|ABC|<N(ABC)^{s}
\end{eqnarray*}
\end{con2}
\begin{theorem}
	Let $u\geq 1$ be a positive integer. Then the weak form of Szpiro's conjecture implies that $\prod_{i=1}^rA_i^{n_i}n_i!=x^2-u^2$ has finitely many integer solutions.
\end{theorem}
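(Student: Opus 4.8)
The plan is to reduce the problem to bounding $m := \max_i n_i$, and to obtain that bound by feeding a carefully chosen coprime triple into the weak form of Szpiro's conjecture. Write $M := \prod_{i=1}^r A_i^{n_i}n_i!$, so the equation reads $x^2-u^2=M$. Since $M>0$ we have $|x|>u$, and replacing $x$ by $|x|$ we may assume $x>u\geq 1$, so that $x-u$ and $x+u$ are both positive integers. The naive triple $u^2+M=x^2$ is useless here, because its radical $N(u^2Mx^2)=N(uMx)$ contains the factor $N(x)$, which can be as large as $x\asymp\sqrt M$ and thus carries no cancellation. Instead I would factor $M=(x-u)(x+u)$ and use the additive relation $(x-u)+2u=(x+u)$.

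Concretely, set $g:=\gcd(x-u,x+u)$; since $g\mid (x+u)-(x-u)=2u$ we have $g\leq 2u$, a bounded quantity because $u$ is fixed. Put $a:=(x-u)/g$, $b:=2u/g$ and $c:=(x+u)/g$, so that $a+b=c$. These are positive integers, and they are pairwise coprime: $\gcd(a,c)=1$ by construction, whence $\gcd(a,b)=\gcd(a,c-a)=\gcd(a,c)=1$ and likewise $\gcd(b,c)=1$. The weak form of Szpiro's conjecture then supplies a constant $s>0$ with $abc<N(abc)^s$.

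The decisive point is bounding the two sides. First, $abc=\dfrac{2uM}{g^3}\geq\dfrac{2uM}{(2u)^3}=\dfrac{M}{4u^2}$. Second, since $a$ and $c$ are coprime, $N(a)N(c)=N(ac)=N(M/g^2)$, which divides $N(M)$; together with $N(b)\leq b\leq 2u$ this gives $N(abc)=N(a)N(b)N(c)\leq 2u\cdot N(M)$. The crux is that although $a$ and $c$ are individually of size $\asymp\sqrt M$, their product $ac$ divides $M$, whose radical is \emph{small}: every prime dividing $M$ either divides some $A_i$ or is $\leq m$, so $N(M)\leq\bigl(\prod_{i=1}^rN(A_i)\bigr)\prod_{p\leq m}p$, and by Chebyshev's estimate $\log\prod_{p\leq m}p=\theta(m)=O(m)$. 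Hence $\log N(M)=O(m)$. Taking logarithms in Szpiro's inequality yields
\[
\log M-O(1)<s\,\log N(M)+O(1)=O(m).
\]
But $\log M\geq\log(m!)\sim m\log m$ by Stirling, so $m\log m=O(m)$, which is impossible once $m$ is large. Therefore $m$, and hence every $n_i$, is bounded.

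Finally, boundedness of the $n_i$ leaves only finitely many tuples $(n_1,\dots,n_r)$; for each such tuple $M$ is a fixed positive integer and $x^2=M+u^2$ has at most two integer solutions in $x$. Thus the equation has only finitely many integer solutions, as claimed. I expect the main obstacle to be precisely the packaging of the ABC triple in the second paragraph: one must keep $x$ itself out of the radical and instead route the estimate through the product $(x-u)(x+u)=M$, which has an unusually small radical because $M$ is essentially a product of factorials. Once this is arranged, the super-linear growth $m\log m$ of $\log M$ against the linear growth $O(m)$ of $\log N(M)$ forces finiteness.
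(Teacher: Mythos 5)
Your proof is correct and is essentially the paper's argument: both apply the weak form of Szpiro's conjecture to a coprime triple built from the relation $(x-u)+2u=(x+u)$, bound the radical side by $N\bigl(\prod_i A_i^{n_i}n_i!\bigr)$, which grows only exponentially in $\max_i n_i$ (the primorial/Chebyshev bound, appearing in the paper as $N(A^nn!)\leq N(A)\cdot 4^n$), and play this against the factorial growth of the product via Stirling to force $\max_i n_i$ to be bounded. The only difference is how coprimality of the triple is arranged: the paper assumes the $n_i$ are large enough that $u^2$ divides the product, writes $x=ux_0$ with $x_0$ odd, and applies Szpiro to $1+\frac{x_0-1}{2}=\frac{x_0+1}{2}$, whereas you divide out $g=\gcd(x-u,x+u)\leq 2u$ --- a normalization that is in fact more robust, since it requires no divisibility assumption on the $n_i$ and covers the even-$x_0$ case, which the paper dismisses with an unjustified ``without loss of generality''.
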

Throughout this paper we denote by $\omega(x)$ the number of distinct primes in the prime facotization of a positive integer $x$ and by $K(x)$ the maximum exponent in the prime factorization. For any pair $k,l\geq 2$, let us define
$$
\mathcal{F}_k=\{x\in \mathbb{N}\mid K(x)<k\} \quad \textnormal{and}\quad \mathcal{P}_l=\{x\in \mathbb{N}\mid \omega(x)<l\}.
$$
We fix an integer $a\in\mathbb{Z}$ and a polynomial $R(x_1,...,x_m)\in \mathbb{Z}[x_1,...,x_m]$ and define the following set
$$
S_{a,R}=\{x_1\cdots x_m\cdot R(x_1,...,x_m)+a\mid x_1,...,x_m\in \mathcal{F}_k\mathcal{P}_l\}.
$$
\begin{theorem}
	Fix positive integers $A_1,...,A_r$. Let $P(x)=(x-a)^e\prod_{i=1}^sP_i(x)^{e_i}$, where $P_i(x)$ are distinct irreducible polynomials. Then the diophantine equation $P(x)=\prod_{i=1}^sA_i^{n_i}n_i!\cdot \prod_{i=s+1}^rA_i^{n_i}n_i!!$ has finitely many solutions $(n_1,...,n_r,x+a)$, where $x\in \mathcal{F}_k$.
\end{theorem}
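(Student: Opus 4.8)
The plan is to force the quantity $n:=\max_i n_i$ to be bounded; once $n$ is bounded there are only finitely many tuples $(n_1,\dots,n_r)$, each giving a fixed value $N:=\prod_{i=1}^sA_i^{n_i}n_i!\cdot\prod_{i=s+1}^rA_i^{n_i}n_i!!$, and the polynomial equation $P(x+a)=N$ then has at most $\deg P$ solutions, so the solution set is finite. Following the notation of the statement, I write a solution as $(n_1,\dots,n_r,x+a)$ with $x\in\mathcal F_k$, so that the distinguished factor of $P$ is exactly $x^e$ and the equation reads $x^e\prod_{i=1}^sP_i(x+a)^{e_i}=N$; in particular $x\mid N$ (here I use $e\ge 1$). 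Suppose, for contradiction, that there are infinitely many solutions. Since each fixed $(n_1,\dots,n_r)$ produces only finitely many values $x+a$, the maximum $n$ must be unbounded, so I may pass to a sequence of solutions along which $n\to\infty$. As $P$ is a fixed polynomial and $N\to\infty$, this forces $|x+a|\to\infty$, hence $|x|\to\infty$ and $\log|x|=\log|x+a|+o(1)$.

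For the lower bound, pick the index $i_0$ with $n_{i_0}=n$. The corresponding factor is $n!$ or $n!!$, and by Stirling's formula $\log n!=n\log n\,(1+o(1))$ while $\log n!!\ge\tfrac12\log n!\,(1+o(1))$; every remaining factor of $N$ is at least $1$. Therefore $\log N\ge\tfrac12 n\log n\,(1+o(1))$. Combining this with the trivial bound $N=|P(x+a)|\le C\,|x+a|^{d}$, where $d=\deg P$ and $C$ is a constant (valid for large $|x+a|$), gives $d\log|x+a|\ge\log N-\log C$, and therefore
\[
\log|x|\ \ge\ \frac{1}{2d}\,n\log n\,(1+o(1)).
\]

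For the upper bound I exploit that $x$ is $k$-free. Every prime dividing $n_i!$ or $n_i!!$ is at most $n_i\le n$, and the only further primes that can occur in $N$ are the finitely many fixed primes dividing some $A_i$; let $M$ be the largest of these. Hence every prime factor of $N$, and in particular every prime factor of the divisor $x\mid N$, is at most $\max(n,M)$. Since $x\in\mathcal F_k$, each such prime appears in $x$ with exponent at most $k-1$, so with $\theta(y)=\sum_{p\le y}\log p$ one gets
\[
|x|\ \le\ \Big(\prod_{p\le\max(n,M)}p\Big)^{k-1},\qquad \log|x|\ \le\ (k-1)\,\theta\!\big(\max(n,M)\big).
\]
By the prime number theorem $\theta(y)=y\,(1+o(1))$, whence $\log|x|\le (k-1)\,n\,(1+o(1))$ once $n$ is large.

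Comparing the two estimates yields $\tfrac{1}{2d}\,n\log n\,(1+o(1))\le (k-1)\,n\,(1+o(1))$, that is $\log n\le 2d(k-1)(1+o(1))$, which bounds $n$ and contradicts $n\to\infty$; this gives the desired finiteness. The step I expect to be the main obstacle is the upper bound: everything rests on the fact that the prime factors of the $k$-free factor $x$ are all $\le n$ up to the fixed primes of the $A_i$, which in turn uses both $x\mid N$ (forced by $e\ge1$) and the smoothness of the sequences $n!$ and $n!!$. The analytic ingredients — Stirling's formula and $\theta(y)\sim y$ — are standard, and passing to double factorials only costs a harmless factor $\tfrac12$ in the growth rate, which is absorbed into the constant $d$.
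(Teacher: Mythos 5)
Your proof is correct and follows essentially the same strategy as the paper: use $x \mid P(x+a) = N$ together with $k$-freeness and the smoothness of $N$ to bound $x$ exponentially in $n=\max_i n_i$, then play this against the super-exponential (Stirling) lower bound on $N$ via the polynomial bound $N \le C|x+a|^d$ to force $n$ to be bounded. The only cosmetic difference is that you invoke $\theta(y)\sim y$ directly where the paper routes the same estimate through its Lemma 2.1 (Chebyshev's bound on $\pi(n)$ applied to divisors of $C^{n}n!^{r}$).
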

\noindent
In a similar way, one can show:
\begin{theorem}
		Let $P(x)=(x-a)^e\prod_{i=1}^sP_i(x)^{e_i}$, where $P_i(x)$ are distinct irreducible polynomials. Then the diophantine equation $P(x)=\prod_{i=1}^sn_i!\cdot \prod_{i=s+1}^rn_i!!$ has finitely many solutions $(n_1,...,n_r,x)$ where $x\in S_{a,R}$.
\end{theorem}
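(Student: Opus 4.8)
The plan is to follow the scheme of the preceding theorem and reduce everything to an upper bound on $\nu:=\max_i n_i$, the only new feature being that the membership $x\in S_{a,R}$ forces us to combine the radical estimate used there with a $q$-adic valuation estimate in order to absorb the $\mathcal{P}_l$-part. Write $M:=\prod_{i=1}^s n_i!\cdot\prod_{i=s+1}^r n_i!!$ for the right-hand side. Two elementary facts about $M$ will drive the argument. First, every prime dividing $M$ is at most $\nu$, so by Chebyshev's bound $\log N(M)\le\theta(\nu)=\sum_{p\le\nu}\log p\ll\nu$. Second, for the $q$-adic valuation $v_q$ one has $v_q(n!)\le\frac{n}{q-1}$ and likewise $v_q(n!!)\le\frac{n}{q-1}$ for every prime $q$; hence $v_q(M)\ll\frac{r\nu}{q-1}$, and since $\frac{\log q}{q-1}\le\log 2$ we obtain $v_q(M)\log q\ll r\nu$ uniformly in $q$.

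The key step will be the following claim: if $y\in\mathcal{F}_k\mathcal{P}_l$ and $y\mid M$, then $\log y\ll\nu$, with implied constant depending only on $k,l,r$. To establish it I split $y=f'g'$ into coprime factors, where $f'$ collects the prime powers $p^{v_p(y)}$ with $v_p(y)<k$ and $g'$ those with $v_p(y)\ge k$. Then $f'$ is $k$-free, so $f'\le N(f')^{k-1}$, giving $\log f'\le(k-1)\log N(f')\le(k-1)\log N(M)\ll(k-1)\nu$. For $g'$, write $y=fg$ with $f\in\mathcal{F}_k$ and $g\in\mathcal{P}_l$: any prime occurring in $y$ to exponent $\ge k$ must divide $g$, whence $\omega(g')\le\omega(g)<l$. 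Writing $g'=\prod_j q_j^{b_j}$ and using $g'\mid M$, this gives $\log g'=\sum_j b_j\log q_j\le\sum_j v_{q_j}(M)\log q_j\ll(l-1)\,r\nu$. Adding the two bounds proves the claim.

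Next, to exploit $x\in S_{a,R}$, I write $x-a=y_1\cdots y_m\,R(y_1,\dots,y_m)$ with each $y_j\in\mathcal{F}_k\mathcal{P}_l$. Since $e\ge1$, the linear factor $(x-a)$ divides $P(x)=M$, so $x-a\mid M$ and in particular $y_j\mid M$ for each $j$. The claim yields $\log y_j\ll\nu$, and polynomial growth gives $\log\abs{R(y_1,\dots,y_m)}\ll(\deg R)\max_j\log y_j\ll\nu$; hence $\log\abs{x-a}\le\sum_j\log y_j+\log\abs{R(y_1,\dots,y_m)}\ll(m+\deg R)\nu$, that is, $\log x\ll\nu$. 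Comparing sizes in $P(x)=M$, the degree hypothesis $d\ge2$ gives $d\log x=\log M+O(1)\ge\log(\nu!)+O(1)\gg\nu\log\nu$ (and if the maximal index belongs to a double factorial, $\log(\nu!!)\gg\nu\log\nu$ still holds). Combining $\log x\ll\nu$ with $\log x\gg\frac{\nu\log\nu}{d}$ forces $\log\nu\ll d$, so $\nu$ is bounded in terms of $k,l,r,m,\deg R$ and $d$ alone. Consequently there are only finitely many admissible tuples $(n_1,\dots,n_r)$, and for each of them the equation $P(x)=M$ has finitely many solutions $x$, which would complete the proof.

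The main obstacle is the claim, and within it the control of the powerful factor $g'$. In the $k$-free situation of the preceding theorem one has $g'=1$ and the radical estimate alone suffices; here $N(g')$ can be arbitrarily small compared with $g'$, so the radical estimate is useless for $g'$ and must be replaced by the $q$-adic valuation estimate. This is exactly where the bound $\omega(g')<l$ inherited from the $\mathcal{P}_l$-factor is indispensable, as it caps the number of primes over which the possibly large valuations $b_j\le v_{q_j}(M)$ are summed. A secondary, purely technical point is to check that the polynomial factor $R(\vec y)$ contributes only $O(\nu)$ to $\log\abs{x-a}$, which is immediate once the per-variable bounds $\log y_j\ll\nu$ are in hand.
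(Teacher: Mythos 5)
Your proof is correct, and although its outer skeleton coincides with the paper's — write $x-a=y_1\cdots y_mR(y_1,\dots,y_m)$, push the factors $y_j$ into the factorial product $M$, bound them, and finish by playing the resulting exponential bound against Stirling to cap $\nu=\max_i n_i$ — the key technical estimate is genuinely different, and in fact sharper. The paper handles the $\mathcal{P}_l$-part with its Lemma 2.2, a small/large-prime dichotomy governed by a free parameter $a$, yielding $z\le(al+1)^{n_rlr}\,n_r^{n_rr^2/a}$; the residual factor still grows like $\exp\bigl(\tfrac{r^2d}{a}\,n_r\log n_r\bigr)$ and must be beaten at the very end by the choice $a=4r^2d$ against the $\tfrac12\,n_r\log n_r$ coming from Stirling. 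You instead exploit the uniform bound $v_q(M)\log q\le r\nu\log 2$ (the point being $\frac{\log q}{q-1}\le\log 2$), which combined with $\omega(g')<l$ gives the purely linear estimate $\log g'\le(l-1)r\nu\log 2$: no $\nu\log\nu$ residue and no parameter to tune, so your endgame is the clean comparison $\log M\ll d\nu$ versus $\log M\gg\nu\log\nu$. Two further, minor deviations: you split each $y_j$ into its low-exponent part (exponents $<k$) and high-exponent part (exponents $\ge k$), observing that the latter inherits $\omega<l$ from the $\mathcal{P}_l$-factor, whereas the paper simply bounds the two factors of $x_{i_0}=yz$, $y\in\mathcal{F}_k$, $z\in\mathcal{P}_l$, separately via its Lemmas 2.1 and 2.2; and you bound $|x-a|$ through all the $y_j$ plus $R$, whereas the paper passes through the compound polynomial $Q(X_1,\dots,X_m)=P(X_1\cdots X_mR+a)$ and its maximal variable. (One cosmetic point: $d\log|x|=\log M+O(1)$ presupposes $|x|$ large, but if $|x|$ stays bounded then $M$, hence $\nu$, is bounded anyway.) Net effect: your route proves a stronger lemma — every divisor of $M$ lying in $\mathcal{F}_k\mathcal{P}_l$ is $\exp(O_{k,l,r}(\nu))$ — with a simpler conclusion, and the same lemma would streamline the proof of Theorem 4 as well; the paper's route has the merit of reusing its stated Lemmas 2.1 and 2.2 as black boxes.
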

\begin{theorem}
	Let $d>r$. Then the equation $x^d=\prod_{i=1}^rH_{n_i}$ has finitely many integer solutions, where $H_{n_i}$ is allowed to be any of the sequences $A_i^{n_i}n_i!, A_i^{n_i}n_i!!$ or $A_i^{n_i}[1,...,n_i]$ or $A_i^{n_i}n_i\#$ or $A_i^{p_{n_i}}p_{n_i}\#$. 
\end{theorem}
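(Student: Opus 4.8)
The plan is to exploit the rigidity of perfect powers: in $x^d=\prod_{i=1}^r H_{n_i}$ every prime must divide the right-hand side to a multiplicity divisible by $d$. Since $d>r$, it will suffice to exhibit a single prime occurring to a multiplicity strictly between $0$ and $d$, which is then impossible and yields finiteness. First I would reduce to the large-parameter regime. For a fixed tuple $(n_1,\dots,n_r)$ the quantity $\prod_i H_{n_i}$ is a fixed positive integer, so there are at most two integers $x$ with $x^d$ equal to it. Hence infinitely many solutions would force infinitely many distinct tuples, and thus $N:=\max_i n_i\to\infty$. It therefore suffices to derive a contradiction whenever $N$ is sufficiently large.

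The central step is the choice of prime. Write $P_i$ for the largest prime factor of the divisible-sequence part of $H_{n_i}$ (ignoring the prefactor $A_i^{\bullet}$), and set $q:=\max_i P_i$. Since the product tends to infinity, $q\to\infty$ as $N\to\infty$, and because the $A_i$ are fixed, for $N$ large $q$ exceeds every prime factor of every $A_i$; hence $q$ is the largest prime dividing the whole product and the prefactors $A_i^{\bullet}$ contribute nothing to its multiplicity. I then claim $q$ occurs to exponent exactly $1$ in each $H_{n_i}$ that it divides. If $H_{n_i}$ is $A_i^{n_i}n_i!$, $A_i^{n_i}n_i!!$ or $A_i^{n_i}[1,\dots,n_i]$, then $q=P_i$ is the largest prime $\le n_i$, so $q>n_i/2$ by Bertrand's postulate; thus $q^2>n_i$, at most one multiple of $q$ is $\le n_i$, and the exponent is $1$. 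If $H_{n_i}$ is a primorial $A_i^{n_i}n_i\#$ or $A_i^{p_{n_i}}p_{n_i}\#$, the sequence part is squarefree, so every prime factor occurs to exponent $1$ automatically.

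Combining these facts controls the total multiplicity. If $q\mid H_{n_j}$ then $q\le P_j$, while $q=\max_i P_i\ge P_j$, so $q=P_j$; hence $q$ divides precisely those factors for which it is the top prime, each to exponent $1$. Consequently the multiplicity $m$ of $q$ in $\prod_i H_{n_i}=x^d$ equals $\#\{i:P_i=q\}$, an integer with $1\le m\le r$. But $x^d$ forces $d\mid m$, and the only multiple of $d$ lying in $[0,r]$ is $0$ because $r<d$; this contradicts $m\ge 1$. Therefore $N$ is bounded, and the equation has only finitely many integer solutions.

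The main obstacle I anticipate is verifying the exponent-one claim uniformly across all six sequence types — in particular the double factorial, where parity restricts which multiples of $q$ actually occur, and the primorial $A_i^{p_{n_i}}p_{n_i}\#$, where the parameter indexes primes rather than bounding them, so one must track that its largest prime factor is $p_{n_i}$ itself. The remaining care is purely bookkeeping: ensuring the normalization $q=\max_i P_i\to\infty$ interacts correctly with the fixed prefactors $A_i^{\bullet}$ so that they never influence the multiplicity of $q$.
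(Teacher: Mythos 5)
Your proposal is correct, and it reaches the same contradiction as the paper --- a prime whose multiplicity in $\prod_{i=1}^r H_{n_i}=x^d$ lies strictly between $0$ and $d$ --- but by a genuinely different selection of that prime. The paper fixes $n_j=\max_i n_i$ and uses Bertrand's postulate to pick a prime in an interval tied to $n_j$, namely $(n_j/2,\,n_j)$; this forces a case split, because when $H_{n_j}=A_j^{n_j}n_j!!$ with $n_j$ even, an odd prime in $(n_j/2,\,n_j)$ does not divide $n_j!!$ at all, so the paper must retreat to a prime in $(n_j/4,\,n_j/2)$ and, in one sub-case, re-select both the index and the interval. You instead take $q$ to be the largest prime factor of the entire product (equivalently $\max_i P_i$), which makes the lower bound $\nu_q(\prod_i H_{n_i})\geq 1$ automatic and replaces the interval casework by a single uniform lemma: the top prime factor of each sequence part occurs to exponent exactly $1$. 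This is arguably cleaner; in particular, in the paper's final sub-case the re-selected index could itself carry an even double factorial, so the newly chosen prime again need not divide that factor --- exactly the kind of leak your construction is immune to, since your $q$ divides at least one factor by definition.

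Two small repairs are needed in your write-up, both of which you partly anticipated. First, your claim that for $H_{n_i}=A_i^{n_i}n_i!!$ the prime $q=P_i$ is ``the largest prime $\le n_i$'' is false when $n_i$ is even: there $n_i!!=2^{n_i/2}(n_i/2)!$, so $P_i$ is the largest prime $\le n_i/2$. The conclusion you need survives by the same mechanism: Bertrand gives $P_i>n_i/4$, hence $2P_i>n_i/2$ and $\nu_{P_i}\bigl((n_i/2)!\bigr)=1$, so the exponent is still $1$. Second, ``the product tends to infinity, hence $q\to\infty$'' is not a valid inference on its own (consider $2^n$); the correct reason is that $q\ge P_j$ for the index $j$ with $n_j=N$, and for every admissible sequence type $P_j$ is at least the largest prime $\le n_j/2$, which tends to infinity with $N$. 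With these two points patched, your argument is complete.
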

\noindent
In the case $d\leq r$ it seems that we have no general and definite answer. Instead, whether there are finitely many or infinitely many solutions depends heavily on the concrete factors in the product $\prod_{i=1}^rH_{n_i}$. For example, if $d=2$, the equations $x^2=A^nn!\cdot B^mm!$ and $x^2=A^nn!!\cdot B^mm!!$ are treated in the papers \cite{NOO} and \cite{NO}. In both cases there are infinitely many integer solutions and they can be constructed. If we consider $x^2=n!\cdot m!!$, then $m=2l, n=l$ with $l$ even gives $x^2=l!2^ll!$ infinitely many solutions. Obviously, the equation $x^2=n\#\cdot m\#$ has infinitely solutions as well. Just set $n=m$. If we consider equations of the form $x^2=n!\cdot m\#\cdot l!$, then by setting $m=1$, we conclude that there are again infinitely many solutions. We believe that finitely many solutions occur for $x^2=n!\cdot n\#$ or $x^2=n!!\cdot [1,...n]$, but at the moment we were not able to give an argument.

\section{Preliminary Results}
In this section we collect certain facts that we shall apply in the next sections. We state some classical number theoretic results. Let $\pi(n)$ be the number of primes less than or equal to $n$. Chebyshev's theorem yields a refinement of the prime number theorem and states
$$
\frac{n}{\textnormal{ln}(n)}\leq \pi(n)\leq \frac{3}{2}\frac{n}{\textnormal{ln}(n)},
$$
for all $n\geq 2$. Furthermore, Stirling's formula states that $n!$ asymptotically behaves like $n^n\textnormal{exp}(-n)\sqrt{2\pi n}$ as $n\rightarrow \infty$. Moreover, for all $n\geq 1$, we have
$$
\left(\frac{n}{e}\right )^n\leq n!\leq n^n.
$$
For a prime $p$, let $\nu_p(n)$ denote the largest power of $p$ dividing $n$. Legendre's formula states
$$
\nu_p(n!)=\sum_{i=1}^{\infty}\lfloor\frac{n}{p^i} \rfloor,
$$
where $\lfloor x\rfloor$ is the floor function. This yields
$$
\nu_p(n!)\leq \sum_{i=1}^{\infty}\frac{n}{p^i}\leq \frac{n}{p-1}.
$$
All these facts can be found for instance in \cite{HW}. Fix positive integers $s$ and $A$. It is easy to conclude that
$$
\nu_p(A^nn!^s)=\nu_p(A^n)+\nu_p(n!^s)=n\nu_p(A)+s\nu_p(n!)\leq n\nu_p(A)+\frac{ns}{p-1}.
$$

As mentioned in the introduction, in the prime factorization of a positive integer 
$$
x=\prod_{i\in I}p_i^{\alpha_i}
$$
we denote $\omega(x)=|I|$ and $K(x)=\underset{i\in I}{\textnormal{max}}\ \alpha_i$.
\begin{lem}
	Fix positive integers $s$ and $A$. Let $x$ and $n$ be positive integers and assume $x$ divides $A^nn!^s$. Then
	$$
	x\leq A^{K(x)\omega(x)}\cdot \textnormal{exp}\left(\frac{3nsK(x)}{2} \right)
	$$
\end{lem}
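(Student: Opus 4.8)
The plan is to start from the prime factorization $x=\prod_{i\in I}p_i^{\alpha_i}$, so that $\omega(x)=|I|$ and, by definition of $K(x)$, every exponent satisfies $\alpha_i\le K(x)$. The idea is to bound $x$ by estimating each factor $p_i^{\alpha_i}$ separately, after sorting the primes $p_i$ according to whether or not they divide $A$. Since $x$ divides $A^n n!^s$, every prime $p_i$ divides $A^n n!^s$; if $p_i\nmid A$, then $p_i\mid n!$, and hence $p_i\le n$. This produces a partition $I=I_A\sqcup I_n$ with $p_i\le A$ for $i\in I_A$ and $p_i\le n$ for $i\in I_n$.

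First I would treat the primes dividing $A$. For $i\in I_A$ we have $p_i\le A$ together with $\alpha_i\le K(x)$, so $p_i^{\alpha_i}\le A^{K(x)}$; since there are at most $\omega(x)$ primes in $I_A$, the product over $I_A$ is at most $A^{K(x)\omega(x)}$, which is precisely the first factor in the claimed bound.

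The heart of the estimate is the product over $I_n$. Here every $p_i\le n$ and $\alpha_i\le K(x)$, so $p_i^{\alpha_i}\le n^{K(x)}$, and because the $p_i$ with $i\in I_n$ are distinct primes not exceeding $n$, we have $|I_n|\le\pi(n)$. Hence the product over $I_n$ is at most $n^{K(x)\pi(n)}=\exp\!\big(K(x)\,\pi(n)\ln n\big)$. At this point I would invoke the Chebyshev upper bound $\pi(n)\le\frac32\frac{n}{\ln n}$ from the preliminaries, which converts $\pi(n)\ln n$ into $\tfrac{3n}{2}$ and yields $\exp\!\big(\tfrac{3nK(x)}{2}\big)$ (the case $n=1$ being trivial, as then $n!=1$ and $I_n=\emptyset$). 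Multiplying the two estimates gives $x\le A^{K(x)\omega(x)}\exp\!\big(\tfrac{3nK(x)}{2}\big)$, and since $s\ge1$ we may enlarge the exponent to $\tfrac{3nsK(x)}{2}$, which is the asserted inequality.

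The only genuinely delicate point is the passage $n^{\pi(n)}\le\exp(3n/2)$: it is crucial to cap each exponent $\alpha_i$ at $K(x)$ rather than to use the exact valuation bound $\alpha_i\le n\nu_{p_i}(A)+s\nu_{p_i}(n!)$ recorded above (which would only reproduce the trivial $x\le A^n(n!)^s$), so that $K(x)$ appears linearly in the exponent and the prime count can be handled by Chebyshev. Everything else is bookkeeping; I do not expect the dependence on $s$ to cause any difficulty, since it enters only through the elementary inequality $s\ge1$ applied in the final step.
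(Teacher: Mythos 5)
Your proof is correct and follows the same basic strategy as the paper's: write $x=\prod_{i\in I}p_i^{\alpha_i}$, cap every exponent at $K(x)$, bound the number of relevant primes by $\pi(n)$, and convert $n^{K(x)\pi(n)}$ into $\exp\left(\tfrac{3nK(x)}{2}\right)$ via the Chebyshev bound. The one point where you diverge is worth noting: the paper does not partition $I$; it bounds every prime uniformly by $p_i\leq An^s$ (refined to $An$), then splits $(An^s)^{K(x)|I|}\leq A^{K(x)|I|}n^{sK(x)\pi(n)}$, asserting along the way that $|I|\leq\pi(n)$. That assertion is not literally true when $A$ has prime factors exceeding $n$, so the paper's writeup is slightly sloppy at exactly the spot your decomposition $I=I_A\sqcup I_n$ handles cleanly: primes dividing $A$ are charged to the factor $A^{K(x)\omega(x)}$, and only the primes dividing $n!$ (hence $\leq n$, hence at most $\pi(n)$ of them) are charged to the exponential factor. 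Your version is therefore a tightened form of the same argument rather than a different one; it also correctly flags that the factor $s$ in the exponent enters only through $s\geq 1$, which in the paper enters instead through the (unnecessary) bound $p_i\leq An^s$.
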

\begin{proof}
	Let $x=\underset{i\in I}{\prod}p_i^{\alpha_i}$ be the prime factorization of $x$. By definition, $\alpha_i\leq K(x)$ for all $i\in I$. Therefore 
	$$
	x\leq \underset{i\in I}{\prod}p_i^{K(x)}.
	$$
	And since $x$ divides $A^nn!^s$, we conclude $p_i\leq An^s$. Note that even $p_i\leq An$. The number of distinct prime factors $|I|$ is bounded by $\pi(n)$. Hence
	$$
	x\leq (An^s)^{K(x)|I|}
	\leq A^{K(x)|I|}n^{sK(x)\pi(n)}=A^{K(x)\omega(x)}\cdot \textnormal{exp}\left(\frac{3nsK(x)}{2} \right).
	$$
\end{proof}
\noindent
In particular, if $n\geq A$ and $x\mid A^nn!$, then $A^{K(x)\omega(x)}\leq n^{K(x)|I|}$ and the proof of Lemma 2.1 yields
$$
x\leq A^{K(x)\omega(x)}\cdot n^{sK(x)\pi(n)}\leq  n^{2sK(x)\pi(n)}=\textnormal{exp}\left(3nsK(x)\right).
$$
\begin{lem}
	Fix positive integers $s$ and $A$. Let $x$ and $n$ be positive integers and assume $x$ divides $A^nn!^s$. Then
	$$
	x\leq \left(a\omega(x)+1\right)^{n\omega(x)(\beta + s)}(An^s)^{n(\beta\omega(x)+\frac{s}{a})}
	$$
	for any arbitrary positive integer $a$.
\end{lem}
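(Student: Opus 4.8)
The plan is to refine the estimates behind Lemma 2.1 by splitting the prime factors of $x$ according to their size, with the cut-off governed by the free parameter $a$. Write $x=\prod_{i\in I}p_i^{\alpha_i}$ and set $\omega=\omega(x)=|I|$. As in the proof of Lemma 2.1 every $p_i$ divides $A^nn!^s$, so $p_i\le An^s$; moreover, from Legendre's formula and the valuation computation recalled above,
$$
\alpha_i=\nu_{p_i}(x)\le \nu_{p_i}(A^nn!^s)=n\nu_{p_i}(A)+s\nu_{p_i}(n!)\le n\beta+\frac{ns}{p_i-1},
$$
where $\beta$ is a uniform bound for $\nu_p(A)$ over all primes $p$ (for instance $\beta=K(A)$).

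First I would handle the ``small'' primes $p_i\le a\omega+1$. For any prime $p_i\ge 2$ one has $p_i-1\ge 1$, hence $\alpha_i\le n\beta+ns=n(\beta+s)$. Since there are at most $\omega$ distinct prime factors in total, the contribution of the small primes is at most
$$
\prod_{p_i\le a\omega+1}p_i^{\alpha_i}\le (a\omega+1)^{\omega\cdot n(\beta+s)}=(a\omega+1)^{n\omega(\beta+s)},
$$
which is exactly the first factor in the claimed bound.

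Next I would treat the ``large'' primes $p_i>a\omega+1$. Here $p_i-1>a\omega$, so $\frac{ns}{p_i-1}<\frac{ns}{a\omega}$ and therefore $\alpha_i\le n\beta+\frac{ns}{a\omega}$. Using $p_i\le An^s$ and again that at most $\omega$ primes occur, their contribution is at most
$$
\prod_{p_i>a\omega+1}p_i^{\alpha_i}\le (An^s)^{\omega\left(n\beta+\frac{ns}{a\omega}\right)}=(An^s)^{n\beta\omega+\frac{ns}{a}}=(An^s)^{n\left(\beta\omega+\frac{s}{a}\right)},
$$
the second factor. Multiplying the two estimates yields the asserted inequality, valid for every positive integer $a$.

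The computation is essentially bookkeeping once the right splitting is chosen; the one genuine decision is the cut-off $a\omega+1$. Its role is twofold: it must be small enough that the crude bound $\alpha_i\le n(\beta+s)$ together with at most $\omega$ small factors costs only a base of size $a\omega+1$, and simultaneously large enough that beyond it the term $\frac{ns}{p_i-1}$ is forced below $\frac{ns}{a\omega}$, so that the $\omega$ large factors contribute the clean exponent $\frac{ns}{a}$ after multiplication. Keeping $a$ free is deliberate, since later one optimizes the two competing factors by choosing $a$ as a function of $n$ and $\omega$.
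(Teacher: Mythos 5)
Your proof is correct and follows essentially the same route as the paper: the same cut-off $a\omega(x)+1$ separating small from large primes, the same exponent bounds $\alpha_i\le n\beta+\frac{ns}{p_i-1}$, and the same final multiplication of the two partial products. The only cosmetic difference is that you bound each exponent individually and multiply by the count $\omega(x)$, whereas the paper bounds the sums $\sum_{i\in J}\alpha_i$ and $\sum_{i\in I\setminus J}\alpha_i$ directly; the resulting exponents are identical.
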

\begin{proof}
	Let $x=\underset{i\in I}{\prod}p_i^{\alpha_i}$ be the prime factorization of $x$ and assume $x$ divides $A^nn!^s$. Notice that $|I|=\omega(x)$, $p_i\leq An^s$ for all $i\in I$, and $\alpha_i\leq n\nu_{p_i}(A)+\frac{sn}{p_i-1}$ for all $i\in I$. If $\beta$ denotes the maximal exponent in the prime factorization of $A$, we have $\alpha_i\leq n\beta+\frac{sn}{p_i-1}$. Now, for a positive integer $a$, define the set
	$$
	J=\{i\in I\mid p_i\leq a\omega(x)+1\}.
	$$
	We then have
	$$
	\underset{i\in J}{\sum}\alpha_i\leq \underset{i\in I}{\sum}(n\beta +\frac{sn}{p_i-1})\leq n\beta\omega(x)+sn\omega(x)=n\omega(x)(\beta+s)
	$$
	and
	$$
	\underset{i\in I\setminus J}{\sum}\alpha_i\leq \underset{i\in I\setminus J}{\sum}(n\beta+\frac{sn}{p_i-1})\leq n\beta\omega(x)+\underset{i\in I\setminus J}{\sum}\frac{sn}{a\omega(x)}\leq n\beta\omega(x)+\frac{sn}{a}.
	$$
	Therefore,
	$$
	x=\underset{i\in I}{\prod}p_i^{\alpha_i}=\underset{i\in J}{\prod}p_i^{\alpha_i}\underset{i\in I\setminus J}{\prod}p_i^{\alpha_i}\leq \left(a\omega(x)+1\right)^{n\omega(x)(\beta + s)}(An^s)^{n\beta\omega(x)+\frac{sn}{a}}.
	$$
\end{proof}
\noindent
If $A=1$, the inequality in Lemma 2.2 simplifies to
$$
x\leq \left(a\omega(x)+1\right)^{n\omega(x)s}(n^s)^{\frac{ns}{a}},
$$
bacause $\beta=0$. 
\noindent
We also need the next result, which is essentially \cite{WT}, Theorem 5.5. At this point, we want to mention that the statement presented in \emph{loc. cit.} needs a minor correction, namely the exponent $1+\epsilon$ must be added. In \cite{WT} this exponent is missing. We formulate a more general result, including the case stated in \cite{WT}, Theorem 5.5 and taking into account the corresponding minor correction.
\begin{thm}
	Let $P(x)\in\mathbb{Q}[x]$ be a polynomial of degree $d\geq 2$ which is not monomial and has at least two distinct roots. Let $F(n_1,...,n_r)$ be a function and assume there exists an $\epsilon>0$, such that $N(F(n_1,...n_r))^{1+\epsilon}=o(F(n_1,...n_r))$ as $(n_1,...,n_r)\rightarrow \infty$. Then the ABC-conjecture implies that $P(x)=F(n_1,...,n_r)$ has finitely many integer solutions.
\end{thm}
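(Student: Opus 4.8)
The strategy is to confront the smoothness hypothesis on $F$ with the fact that, under the ABC conjecture, the values of a polynomial with at least two distinct roots cannot be too smooth: the radical $N(P(x))$ must be bounded below by a power of $|x|$ that is close to $\deg P$. The plan is to suppose, for contradiction, that there are infinitely many integer solutions $(x,n_1,\dots,n_r)$. Since $\deg P=d\ge 2$ and $P$ is non-constant, along any infinite family of solutions $|x|\to\infty$, hence $F=P(x)\to\infty$ and $(n_1,\dots,n_r)\to\infty$; in particular the asymptotic hypothesis $N(F)^{1+\epsilon}=o(F)$ becomes available.

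First I would clear denominators, choosing a positive integer $b$ with $Q:=bP\in\mathbb{Z}[x]$, so that the equation reads $Q(x)=bF$ with both sides integral; note $N(Q(x))=N(bF)\ll N(F)$ since $b$ is fixed, and $Q$ has exactly the same (at least two) distinct roots as $P$. The heart of the argument is to extract from $Q$ an ABC-triple. Writing $Q(x)=c\prod_i q_i(x)^{e_i}$ with the $q_i\in\mathbb{Z}[x]$ distinct and irreducible, the distinctness of the roots means that any two factors are coprime in $\mathbb{Q}[x]$, so their resultants are nonzero integers; feeding a suitable B\'ezout/resultant relation (evaluated at the integer $x$) into the ABC conjecture produces the Langevin-type lower bound
\[
N\bigl(P(x)\bigr)\;\gg_{\delta}\;|x|^{\,\kappa-\delta}
\]
for every $\delta>0$, where $\kappa\ge 1$ depends on the number of distinct roots (one may take $\kappa=d-1$ when $P$ is squarefree, and $\kappa\ge 1$ in general). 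In the simplest situation of two rational roots this is just the classical observation that two linear factors differ by a constant, which is exactly the mechanism behind the Brocard case $x^2-1=n!$.

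It then remains to combine three ingredients: the radical bound $N(F)=N(P(x))\gg|x|^{\kappa-\delta}$, the elementary comparison $F=P(x)\asymp|x|^{d}$ coming from the leading coefficient, and the hypothesis rewritten as $N(F)=o\bigl(F^{1/(1+\epsilon)}\bigr)$. Substituting $F\asymp|x|^{d}$ turns the hypothesis into $N(F)=o\bigl(|x|^{d/(1+\epsilon)}\bigr)$, which together with $N(F)\gg|x|^{\kappa-\delta}$ forces $|x|^{\kappa-\delta}=o\bigl(|x|^{d/(1+\epsilon)}\bigr)$; once $\delta$ is chosen small enough that $\kappa-\delta>d/(1+\epsilon)$, this is impossible for large $|x|$, the desired contradiction.

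The main obstacle is the radical lower bound itself, i.e.\ genuinely deducing $N(P(x))\gg_{\delta}|x|^{\kappa-\delta}$ from the ABC conjecture when $P$ has irrational or repeated roots: the clean "difference of two linear factors" trick only covers distinct rational roots, and the general case requires either passing to the ring of integers of a number field containing the roots and invoking an ABC inequality there, or an elimination/resultant argument to manufacture a genuinely coprime integer triple $A+B=C$ with $\max\{|A|,|B|,|C|\}\asymp|x|^{\kappa}$ and $N(ABC)\ll N(P(x))$. A secondary point requiring care is the bookkeeping of exponents: the value of $\kappa$ attainable from the distinct-root structure must be compatible with the $\epsilon$ supplied by the hypothesis, and this is precisely where the corrected exponent $1+\epsilon$, rather than a bare $N(F)=o(F)$, is essential, since one must secure the strict inequality $\kappa>d/(1+\epsilon)$ for each admissible $F$.
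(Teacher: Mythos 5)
Your plan and the paper's proof diverge at the outset: the paper generalizes Luca's argument from \cite{L} (complete the $d$-th power so that a fixed integer multiple of the equation reads $w^d+S(w)=cF$ with $\deg S\le d-2$, and apply the ABC conjecture to that single triple), whereas you invoke a Langevin--Granville radical bound $N(P(x))\gg_{\delta}|x|^{\kappa-\delta}$. But your proposal has a genuine gap, and it sits exactly at the point you downgrade to ``secondary bookkeeping''. Your contradiction needs $\kappa-\delta>d/(1+\epsilon)$, while the hypothesis hands you only \emph{some} $\epsilon>0$, possibly arbitrarily small. The strongest exponent ABC is known to yield is $\kappa=\tilde d-1$, where $\tilde d\le d$ is the degree of the product of the distinct irreducible factors of $P$. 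So even for squarefree $P$ (where $\kappa=d-1$) you need $\epsilon>1/(d-1)$; and for polynomials with repeated roots, which the hypothesis ``at least two distinct roots'' allows, one can have $\kappa=1$, and this is not a defect of the method: for $P(x)=x^2(x-1)$ and $x=2^k$ one has $N(P(x))=N(x)N(x-1)\le 2(x-1)$, so no bound with exponent larger than $1$ can hold. In that case you would need $\epsilon>d-1\ge 1$. For small $\epsilon$ the inequality $\kappa-\delta>d/(1+\epsilon)$ is therefore unattainable and no contradiction ever appears.

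This gap cannot be closed, because the statement as printed is false. Take $P(x)=x^2(x-1)$ (degree $3$, not a monomial, two distinct roots) and $F(n)=n^2(n-1)$: since $\gcd(n^2,n-1)=1$ we get $N(F(n))\le n(n-1)$, hence $N(F(n))^{5/4}\ll n^{5/2}=o(F(n))$, so the hypothesis holds with $\epsilon=1/4$; yet $(x,n)=(m,m)$ gives infinitely many integer solutions of $P(x)=F(n)$. Any proof must break precisely where yours does. For comparison, the paper's own proof glosses over the same issue: it asserts that ABC produces $F\le C\,N(F)^{1+\epsilon}$ for solutions, but the triple $w^d+S(w)=cF$ only produces $F^{(1-(d-1)\epsilon)/d}\le C\,N(F)^{1+\epsilon}$, because the factor $N\bigl(w^{d}S(w)\bigr)\ll |w|^{d-1}\asymp F^{(d-1)/d}$ must be carried on the right-hand side. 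That weaker inequality does prove finiteness whenever the hypothesis is strengthened to $N(F)^{1+\epsilon}=o(F^{c})$ for every fixed $c>0$ --- which holds for factorial-type $F$ such as $\prod_{i}A_i^{n_i}n_i!$, where $\log N(F)=O\bigl(\sum_i n_i\bigr)$ while $\log F\asymp\sum_i n_i\log n_i$ --- and under that corrected hypothesis your radical-bound route also closes, since any fixed positive exponent $\kappa-\delta$ then contradicts $\log N(F)/\log F\to 0$. So your instinct about which step is delicate was correct; the error was in treating it as bookkeeping rather than as the precise point where the stated hypothesis is too weak.
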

\begin{proof}
In  \cite{L}, it is shown that $P(x)=n!$ has finitely many integer solutions, provided the ABC conjecture holds. In fact, the crucial step in the proof is to show that for a suitable $\epsilon >0$ one has $N(n!)^{1+\epsilon}=o(n!)$ as $n\rightarrow \infty$. In the steps before, the ABC conjecture was essentially applied to obtain an inequality of the form $n!\leq C\cdot N(n!)^{1+\epsilon}$, where $C$ is a suitable positive constant. Notice that this argument can be generalized to functions $F(n_1,...n_r)$. So, if there exists an $\epsilon>0$, such that $N(F(n_1,...n_r))^{1+\epsilon}=o(F(n_1,...n_r))$ as $(n_1,...,n_r)\rightarrow \infty$, then the ABC conjecture implies that $P(x)=F(n_1,...,n_r)$ has finitely many integer solutions. 
\end{proof}
\noindent
Furthermore, we also need the next assertion which essentially follows from the Chebotarev density theorem and can be found in \cite{J}, p.138-139. 
\begin{thm}
	Let $P(x)\in \mathbb{Z}[x]$ be irreducible over $\mathbb{Q}$ of degree $d\geq 2$. Then there are infinitely many primes $p$ such that $P(x)\equiv 0\ (\textnormal{mod}\ p)$ has no solutions.
\end{thm}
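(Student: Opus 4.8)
The plan is to pass from the arithmetic of the congruence to the Galois theory of the splitting field, and then to combine the Chebotarev density theorem with a classical group-theoretic fact of Jordan. First I would let $K$ denote the splitting field of $P$ over $\mathbb{Q}$ and set $G=\mathrm{Gal}(K/\mathbb{Q})$. Viewing $G$ as a permutation group on the $d$ roots $\alpha_1,\dots,\alpha_d$ of $P$, the irreducibility of $P$ guarantees that $G$ acts transitively on this set of size $d\geq 2$.

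Next I would invoke Dedekind's theorem on the factorization of primes: for every prime $p$ that divides neither the leading coefficient nor the discriminant of $P$ (all but finitely many $p$, and in particular the unramified ones), the factorization type of $\overline{P}=P\bmod p$ into irreducibles over $\mathbb{F}_p$ matches the cycle type of the Frobenius conjugacy class $\mathrm{Frob}_p\in G$ acting on the roots. In particular, $P(x)\equiv 0\Mod{p}$ has a solution in $\mathbb{F}_p$ if and only if $\overline{P}$ has a linear factor, which happens precisely when $\mathrm{Frob}_p$, as a permutation of the roots, has a fixed point. Thus the congruence is unsolvable exactly for those $p$ whose Frobenius is a fixed-point-free permutation (a derangement).

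The group-theoretic heart of the argument is the existence of such a derangement in $G$, which is Jordan's theorem: a finite transitive permutation group of degree $\geq 2$ contains a fixed-point-free element. I would prove this by the Burnside counting identity $\sum_{\sigma\in G}\mathrm{fix}(\sigma)=(\text{number of orbits})\cdot|G|=|G|$, valid because the action is transitive. Since the identity already contributes $d\geq 2$ fixed points, if every non-identity element fixed at least one root we would obtain $\sum_{\sigma\in G}\mathrm{fix}(\sigma)\geq d+(|G|-1)>|G|$, a contradiction. Hence $G$ contains a derangement $\sigma$, and its entire conjugacy class $C$ consists of derangements and is non-empty.

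Finally I would apply the Chebotarev density theorem: the set of unramified primes $p$ with $\mathrm{Frob}_p\in C$ has natural density $|C|/|G|>0$, and is therefore infinite. For each such prime, $\mathrm{Frob}_p$ has no fixed point, so by the correspondence above $P(x)\equiv 0\Mod{p}$ has no solution, which is the claim. I expect the main obstacle to be the Jordan derangement step: the counting argument is short, but it is exactly where both the hypothesis $d\geq 2$ and transitivity (coming from irreducibility) are used; the reduction via Dedekind's theorem and the concluding application of Chebotarev are otherwise standard.
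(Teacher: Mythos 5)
Your proof is correct, and it is essentially the argument the paper points to: the paper does not write out a proof at all, but simply cites Janusz (p.~138--139) and notes that the result ``essentially follows from the Chebotarev density theorem,'' which is exactly the route you take. Your write-up supplies the standard details the paper leaves to the reference---Dedekind's factorization theorem, Jordan's derangement lemma via Burnside counting, and the concluding application of Chebotarev---all of which are correctly executed.
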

\noindent
The following facts are well known.
\begin{prop}
	Let $A$ be a positive integer. Then
	\begin{itemize}
		\item[(i)] $N(A^nn!)\leq N(A)\cdot 4^n$,
		\item[(ii)] $N(A^nn!!)\leq N(A)\cdot 4^n$,
		\item[(iii)] $N(A^nn\#)\leq N(A)\cdot n\#$,
		\item[(iv)] $N(A^n[1,...,n])\leq N(A)\cdot n\#$.
		\end{itemize}
	\end{prop}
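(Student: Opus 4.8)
The plan is to reduce all four inequalities to two elementary facts about the algebraic radical together with the classical Chebyshev-type bound $\prod_{p\le n}p\le 4^n$.

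First I would record the submultiplicativity $N(ab)\le N(a)N(b)$, which holds because the set of prime divisors of $ab$ is the union of those of $a$ and of $b$, so every such prime is counted at least once in the product $N(a)N(b)$. Combined with the obvious identity $N(A^n)=N(A)$, since a power has the same prime support as its base, this yields $N(A^nH_n)\le N(A)\,N(H_n)$ in every case and reduces each claim to a bound on $N(H_n)$ alone.

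For (i), (ii) and (iv) I would observe that every prime dividing $n!$, $n!!$, or $[1,\dots,n]$ is at most $n$: each factor of $n!$ and of $n!!$ is $\le n$, and $[1,\dots,n]$ divides $n!$. Hence $N(H_n)\le\prod_{p\le n}p$, and invoking $\prod_{p\le n}p\le 4^n$ settles (i) and (ii); for (iv) one notes in addition that the primorial $n\#=\prod_{p\le n}p$ is exactly the product of the primes $\le n$, so $N([1,\dots,n])\le n\#$ directly. For (iii) the argument is even shorter: $n\#$ is squarefree, whence $N(n\#)=n\#$ and therefore $N(A^n n\#)\le N(A)\,n\#$.

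The only nonroutine ingredient is the Chebyshev-type estimate $\prod_{p\le n}p\le 4^n$; everything else is bookkeeping with prime supports. Since this bound is classical (it follows, for instance, by induction via the divisibility properties of the central binomial coefficients), I do not expect any genuine obstacle here, and the whole proposition should follow in a few lines.
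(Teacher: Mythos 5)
Your proof is correct and complete; the paper itself states this proposition without proof, merely calling the facts ``well known,'' and your argument (submultiplicativity of $N$, the identity $N(A^n)=N(A)$, the observation that all prime divisors of $n!$, $n!!$, $[1,\dots,n]$, $n\#$ are at most $n$, and the Chebyshev-type bound $\prod_{p\le n}p\le 4^n$) is precisely the standard reasoning the paper implicitly relies on. Nothing is missing.
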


\begin{cor}
	Let $A_1,...,A_c$ be fixed positive integers. Then for suitable $\epsilon>0$ the following hold:
	\begin{itemize}
		\item[(i)] Let $F(n_1,..,n_c)=\prod_{i=1}^cA_i^{n_i}n_i!$, then $\frac{N(F(n_1,...,n_c))^{1+\epsilon}}{F(n_1,...,n_c)}\rightarrow 0$ as $(n_1,...,n_c)\rightarrow \infty$.
		\item[(ii)] Let $F(n_1,..,n_c)=\prod_{i=1}^cA_i^{n_i}n_i!!$, then $\frac{N(F(n_1,...,n_c))^{1+\epsilon}}{F(n_1,...,n_c)}\rightarrow 0$ as $(n_1,...,n_c)\rightarrow \infty$. 
		\item[(iii)] Let $F(n_1,..,n_c)=\prod_{i=1}^cA_i^{n_i}$ and assume that at least one $A_i>1$, then $\frac{N(F(n_1,...,n_c))^{1+\epsilon}}{F(n_1,...,n_c)}\rightarrow 0$ as $(n_1,...,n_c)\rightarrow \infty$. 
			\item[(iv)] Let $F(n_1,..,n_c)=\prod_{i=1}^cA_i^{n_i}{n_i}\#$ and assume $A_i>1$ for all $i=1,...,c$, then $\frac{N(F(n_1,...,n_c))^{1+\epsilon}}{F(n_1,...,n_c)}\rightarrow 0$ as $(n_1,...,n_c)\rightarrow \infty$. 
		\item[(v)] Let $F(n_1,..,n_c)=\prod_{i=1}^cA_i^{n_i}[1,...,n_i]$ and assume $A_i>1$ for all $i=1,...,c$, then $\frac{N(F(n_1,...,n_c))^{1+\epsilon}}{F(n_1,...,n_c)}\rightarrow 0$ as
		 $(n_1,...,n_c)\rightarrow \infty$.
	\end{itemize}
\end{cor}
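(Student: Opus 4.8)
The unifying idea is that the radical is submultiplicative, so that $N\bigl(\prod_{i=1}^c H_{n_i}\bigr)\le \prod_{i=1}^c N(H_{n_i})$, while $F$ itself factors as $\prod_{i=1}^c H_{n_i}$. Since $1+\epsilon>0$, raising to that power preserves the inequality, and I would therefore fix $\epsilon>0$ (to be calibrated later) and write
$$
\frac{N(F(n_1,\dots,n_c))^{1+\epsilon}}{F(n_1,\dots,n_c)}\le \prod_{i=1}^c \frac{N(H_{n_i})^{1+\epsilon}}{H_{n_i}},
$$
reducing everything to a product of single-variable quantities. Each such factor will be bounded above by a constant for all admissible values of its argument, and under the stated hypotheses at least the factors corresponding to growing coordinates tend to $0$; hence the product tends to $0$ as $(n_1,\dots,n_c)\to\infty$, which is the claim. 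In the cases where every factor already decays this is immediate, and in case (iii) the assumption ``at least one $A_i>1$'' is exactly what supplies one decaying factor among otherwise bounded ones.

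For (i) and (ii) I would invoke the preceding Proposition, which gives $N(A_i^{n_i}n_i!)\le N(A_i)4^{n_i}$ and $N(A_i^{n_i}n_i!!)\le N(A_i)4^{n_i}$, so that each numerator is at most $\bigl(N(A_i)4^{n_i}\bigr)^{1+\epsilon}$, an expression that is merely \emph{exponential} in $n_i$. For the denominators, Stirling's formula gives $n_i!\ge (n_i/e)^{n_i}$, and for the double factorial one has $n_i!!\ge\sqrt{n_i!}\ge (n_i/e)^{n_i/2}$, using $(n_i!!)^2\ge n_i!!\,(n_i-1)!!=n_i!$; both denominators grow \emph{superexponentially}. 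Thus each factor is at most $N(A_i)^{1+\epsilon}\,c^{n_i}/(n_i/e)^{n_i/2}$ for a constant $c=c(\epsilon)$, which is bounded in $n_i$ and tends to $0$. Here \emph{any} $\epsilon>0$ works and no constraint on $A_i$ is needed, reflecting that the factorial growth alone defeats the radical.

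For (iii) the radical is inert, $N(A_i^{n_i})=N(A_i)$, so the factor is $N(A_i)^{1+\epsilon}/A_i^{n_i}$, which equals $1$ when $A_i=1$ and decays geometrically when $A_i>1$. For (iv) and (v) the Proposition gives $N(A_i^{n_i}n_i\#)\le N(A_i)\,n_i\#$ and $N(A_i^{n_i}[1,\dots,n_i])\le N(A_i)\,n_i\#$, so each factor is at most $N(A_i)^{1+\epsilon}(n_i\#)^{\epsilon}/A_i^{n_i}$, where for (v) I would first note $[1,\dots,n_i]\ge n_i\#$ since every prime $p\le n_i$ divides the lcm. Chebyshev's theorem then yields $\ln(n_i\#)=\sum_{p\le n_i}\ln p\le \pi(n_i)\ln n_i\le \tfrac32 n_i$, so $n_i\#\le e^{3n_i/2}$ and the factor is bounded by $N(A_i)^{1+\epsilon}\bigl(e^{3\epsilon/2}/A_i\bigr)^{n_i}$. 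I expect the main obstacle to be exactly this final calibration: one must establish the denominator lower bounds ($[1,\dots,n]\ge n\#$ and the double-factorial estimate) and then choose a single $\epsilon<\tfrac{2}{3}\min_i\ln A_i$ so that $e^{3\epsilon/2}/A_i<1$ simultaneously for all $i$, which is precisely where the hypothesis $A_i>1$ for all $i$ becomes indispensable. Once the per-factor bounds hold, passing to the product is routine.
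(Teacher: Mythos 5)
Your proof is correct and takes essentially the same route as the paper: reduce via submultiplicativity of the radical to per-factor ratios, bound the radicals by Proposition 2.5, beat them with Stirling's lower bound in the factorial cases, and in cases (iv)--(v) calibrate $\epsilon$ against $\min_i A_i$ using $[1,\dots,n]\geq n\#$. The only (harmless) deviations are that you use Chebyshev's explicit bound $n\#\leq e^{3n/2}$ (forcing $\epsilon<\tfrac{2}{3}\min_i\ln A_i$) where the paper invokes the asymptotic $n\#\sim e^n$ (allowing $e^{\epsilon}<\min_i A_i$), and that you write out cases (i)--(iii) and the double-factorial estimate $n!!\geq\sqrt{n!}$, which the paper leaves to the reader.
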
	
\begin{proof}
We give the proofs for the cases (iv) and (v), because the other cases follow easily from Proposition 2.5 (i) and (ii) using Stirling's approximation. We first give an argument for (iv). To make the argument as clear as possible, we treat the case $c=2$. So let $F(n_1,n_2)=A_1^{n_1}n_1\#\cdot A_2^{n_2}n_2\#$. With Proposition 2.5 (iii), we obtain
$$
N(F(n_1,n_2))^{1+\epsilon}\leq (N(A_1)\cdot N(A_2))^{1+\epsilon}(n_1\#\cdot n_2\#)^{1+\epsilon}
$$
and therefore
$$
0\leq \frac{F(n_1,n_2)^{1+\epsilon}}{F(n_1,n_2)}\leq (N(A_1)\cdot N(A_2))^{1+\epsilon}\cdot \frac{(n_1\#)^{\epsilon}}{A_1^{n_1}}\cdot \frac{(n_2\#)^{\epsilon}}{A_2^{n_2}}.
$$
Using the asymptotics relation $n\#\sim e^n$ and the fact that we can choose an $\epsilon>0$, such that $e^{\epsilon}<\mathrm{min}\{A_1,A_2\}$, we finally conclude
$$
 \frac{(n_1\#)^{\epsilon}}{A_1^{n_1}}\cdot \frac{(n_2\#)^{\epsilon}}{A_2^{n_2}}\rightarrow 0,
$$
as $(n_1,n_2)\rightarrow \infty$. This proves (iv). To show (v), we use Proposition 2.5 (iv) and conclude as above by exploiting the fact that
$$
\frac{n\#^{1+\epsilon}}{A^n[1,...,n]}=\frac{\underset{p\leq n}{\prod} p^{1+\epsilon}}{A^n\underset{p\leq n}{\prod} p^{\lfloor\mathrm{log}_p(n)\rfloor}}\leq \frac{n\#^{1+\epsilon}}{A^nn\#}=\frac{n\#^{\epsilon}}{A^n}.
$$
\end{proof}

\section{Proof of Theorem 1}
\noindent
In fact this is a direct consequence of Theorem 2.4. Let $H_n=A^nn!$, $H_n=A^nn!!$ or $H_n=A^n[1,...,n]$ or $H_n=A^nn\#$ and assume $P(x)=H_n$ has infinitely many integer solutions. In \cite{BH1}, the authors showed that $P(x)\equiv 0\ (\textnormal{mod}\ m)$ for every prime power $m=p^k$, or, equivalently, $P$ has a root in $\mathbb{Z}_p$ for every prime $p$ or, in the case $H_n=A^nn\#$, that $P(x)\equiv 0\ (\textnormal{mod}\ m)$ must be solvable for every square free integer $m$. It is easy to see that the same argument also applies for the equation $P(x)=\prod_{i=1}^rH_{n_i}$, where $H_{n_i}$ are the divisible sequences listed in the statement of Theorem 1. The finiteness of the set of integer solutions then follows directly from Theorem 2.4.
\section{Proof of Theorem 2}
We have to show that $N(\prod_{i=1}^rH_{n_i})^{1+\epsilon}=o(\prod_{i=1}^rH_{n_i})$ for a suitable $\epsilon>0$. Then the finiteness of the set of integer solutions under the ABC conjecture follows from Theorem 2.3. In fact, we have
$$
N(\prod_{i=1}^rH_{n_i})^{1+\epsilon}\leq \prod_{i=1}^{r}N(H_{n_i})^{1+\epsilon}
$$
and our argument reduces to show that for a suitable $\epsilon>0$
$$
\frac{N(H_{n_i})^{1+\epsilon}}{H_{n_i}}\rightarrow 0,
$$
as $n_i\rightarrow \infty$. But this follows directly from Corollary 2.6.
\section{Proof of Theorem 3}
\noindent
Assume $(n_1,...,n_r)$ to be large enough to make $u^2$ divide $\prod_{i=1}^rA_i^{n_i}n_i!$. Then $u$ also divides $x$. Put $x=ux_0$ and without loss of generality we may assume that $x_0$ is odd. Hence $\prod_{i=1}^rA_i^{n_i}n_i!=d^2(x_0^2-1)$. Now we use the weak form of Szpiro's conjecture with $A=1, B=\frac{1}{2}(x_0-1)$ and $C=\frac{1}{2}(x_0+1)$. We then obtain
$$
\frac{d^{-2}}{4}\prod_{i=1}^r(\frac{n_i}{e})^{n_i}\leq \frac{x_0^2-1}{4}\leq N(\frac{x_0^2-1}{4})^s=N(\frac{1}{4}\prod_{i=1}^rA_i^{n_i}n_i!)\leq \prod_{i=1}^rN(A_i)^s\cdot 4^{s(n_1+\cdots+n_r)}.
$$
By symmetry, we may assume $n_1\leq \cdots \leq n_r$. This yields
$$
\frac{d^{-2}}{4}(\frac{n_r}{e})^{n_r}\leq \prod_{i=1}^rN(A_i)^s\cdot 4^{srn_r}.
$$
But this holds only for finitely many $n_r$. Hence there are finitely many integer solutions $(n_1,...,n_r,x)$. 
\section{Proof of Theorems 4 and 5}
\noindent
We first prove Theorem 4. Let $x\in \mathcal{F}_k$ and let $d$ be the degree of $P$. Then
$$
P(x+a)=x^e\prod_{i=1}^tP_i(x+a)^{e_i}\leq M\cdot x^d,
$$
where $M$ is the sum of the absolute values of the coefficients of the polynomial $P$. Let $(n_1,...,n_r,x+a)$ be a solution. Since 
$$
P(x+a)=\prod_{i=1}^sA_i^{n_i}n_i!\cdot \prod_{i=s+1}^rA_i^{n_i}n_i!!,
$$
we see that $x$ divides the right hand side. Without loss of generality, we may assume $n_1\leq n_2\leq\cdots \leq n_r$. Assuming this, we obtain that $\prod_{i=1}^sA_i^{n_i}n_i!\cdot \prod_{i=s+1}^rA_i^{n_i}n_i!!$ divides $(\prod_{i=1}^rA_i)^{n_r}n_r!^r$. Therefore, 
$$
x\mid C^{n_r}n_r!^r,
$$
where $C=\prod_{i=1}^rA_i$. We now apply Lemma 2.1 and the comment immediately afterwards by assuming $n_r\geq C$. This gives 
$$
A^{n_r}n_r!!\leq \prod_{i=1}^sA_i^{n_i}n_i!\cdot \prod_{i=s+1}^rA_i^{n_i}n_i!!=P(x+a)\leq M\cdot x^d\leq M\cdot \textnormal{exp}(3dn_r\cdot rk).
$$
Now the Stirling formula for $n!!$ implies that there are only finitely many $n_r$. Therefore, our diophantine equation has only finitely many integer solutions $(n_1,...,n_r,x+a)$, with $x\in \mathcal{F}_k$. This completes the proof of Theorem 3.\\
\noindent
We now prove Theorem 5. Let $x\in S_{a,R}$ and assume $x$ is a solution. Then
$$
P(x)=P(x_1\cdots x_mR(x_1,...,x_m)+a)=(x_1\cdots x_mR(x_1,...,x_m))^e\cdot \prod_{i=1}^sP_i(x+a)^{e_i}
$$
and we see that $x_1\cdots x_m$ divides $\prod_{i=1}^sn_i!\cdot \prod_{i=s+1}^rn_i!!$. Now define 
$$
Q(X_1,...,X_m):=P(X_1\cdots X_mR(X_1,...,X_m)+a)
$$
and rewrite $Q$ as 
$$
Q=\underset{(i_1,...,i_m)\in L}{\sum}a_{i_1,...,i_m}X_1^{i_1}\cdots X_m^{i_m}.
$$ 
The total degree of $Q$ is given by
$$
d=\underset{(i_1,...,i_m)\in L}{\textnormal{max}}i_1+\cdots +i_m.
$$
Furthermore, let
$$
M=\underset{(i_1,...,i_m)\in L}{\sum}|a_{i_1,...,i_m}|
$$
and notice that for all positive integers $x_1,...,x_m$, one has
$$
Q(x_1,...,x_m)\leq M\cdot (\underset{1\leq i\leq m}{\textnormal{max}}x_i)^d.
$$
Now let $x=x_1\cdots x_mR(x_1,...,x_m)+a$, $n_1,...,n_r$ be a solution. Then $Q(x_1,...,x_m)=\prod_{i=1}^sn_i!\cdot \prod_{i=s+1}^rn_i!!$ and therefore
$$
\prod_{i=1}^sn_i!\cdot \prod_{i=s+1}^rn_i!!\leq M\cdot x_{i_0}^d,
$$
where $\underset{1\leq i\leq m}{\textnormal{max}}x_i=x_{i_0}$. By definition, $x_{i_0}=yz$, with $y\in\mathcal{F}_k$ and $z\in\mathcal{P}_l$. Because $x_1\cdots x_m$ divides $\prod_{i=1}^sn_i!\cdot \prod_{i=s+1}^rn_i!!$, the integer $x_{i_0}$ divides the product $\prod_{i=1}^sn_i!\cdot \prod_{i=s+1}^rn_i!!$ as well. Without loss of generality, we assume $n_1\leq n_2\leq \cdots \leq n_r$. But then $x_{i_0}$ divides $n_r!^r$ and hence $y\mid 
n_r!^r$ and $z\mid n_r!^r$. Now we apply Lemma 2.1 and Lemma 2.2. This yields
$$
y\leq \textnormal{exp}(3n_r\cdot rk)
$$
and
$$
z\leq (al+1)^{n_rlr}n^{n_r\cdot\frac{r^2}{a}}.
$$
Combining the latter inequalities gives
$$
n_r!!\leq \prod_{i=1}^sn_i!\cdot \prod_{i=s+1}^rn_i!!\leq M\cdot\textnormal{exp}(3n_r\cdot rkd)\cdot (al+1)^{n_rlrd}\cdot n_r^{n_r\frac{r^2d}{a}}.
$$
Again, using the Stirling approximation for $n!!$ with $a=4r^2d$ shows that there are only finitely many $n_r$. And since $n_1\leq n_2\leq\cdots \leq n_r$, it follows that there can be only finitely many solutions $(n_1,...,n_r,x)$, with $x\in S_{a,R}$. 
\section{Proof of Theorem 6}
\noindent
By assumption, it is $d>r$. %and assume there are infinitely many solutions $x,n_1,...,n_r$ for $x^d=\prod_{i=1}^rH_{n_i}$. 
Let $n_j=\textnormal{max}\{n_1,...,n_r\}$. Futhermore, assume $n_j>16\cdot \textnormal{max}\{A_1,...,A_r\}$. There are two cases to distinguish.\\
\noindent
\emph{the case} $H_{n_j}$ is not $A^{n_j}n_j!!$: In this case, $H_{n_j}$ is either $A_i^{n_i}n_i!, A_i^{n_i}[1,...,n_i]$ or $A_i^{n_i}n_i\#$ or $A_i^{p_{n_i}}p_{n_i}\#$. Notice that $[1,...,n_j]=\underset{p\leq n_j}{\prod}p^{[\textnormal{log}_p(n_j)]}$. Hence, we can find a prime $q\in (n_j/2,n_j)$. For this prime $q$, we have $\nu_q(\prod_{i=1}^rH_{n_i})\leq r$, whereas $\nu_q(x^d)\geq d$. This gives a contradiction.\\
\noindent
\emph{the case} $H_{n_j}=A^{n_j}n_j!!$: If $n_j$ is odd, we choose a prime $p\in (n_j/2,n_j)$ and see that $\nu_p(\prod_{i=1}^rH_{n_i})\leq r$, whereas $\nu_p(x^d)\geq d$. If $n_j$ is even, it is either $n_j/2\geq \underset{i\neq j}{\textnormal{max}}\{n_i\}$ or $n_j/2< \underset{i\neq j}{\textnormal{max}}\{n_i\}$. If  $n_j/2\geq \underset{i\neq j}{\textnormal{max}}\{n_i\}$, we can pick a prime $p\in (n_j/4,n_j/2)$ and conclude that $\nu_p(\prod_{i=1}^rH_{n_i})\leq r$. If $n_j/2< \underset{i\neq j}{\textnormal{max}}\{n_i\}$, then let $n_{i_0}=\textnormal{max}\{n_1,...,n_j/2,...,n_r\}$. Now pick a prime $p'\in(n_{i_0}/2,n_{i_0})$ and conclude $\nu_{p'}(\prod_{i=1}^rH_{n_i})\leq r$, whereas $\nu_{p'}(x^d)\geq d$. This completes the proof and shows that there can be only finitely many integer solution.

\vspace{0.3cm}
\noindent
%{\tiny DEPARTMENT OF MATHEMATICS, UNIVERSITY OF FINDLAY, OHIO 45840, USA.}\\
%E-mail adress: baczkowski@findlay.edu\\
{\tiny HOCHSCHULE FRESENIUS UNIVERSITY OF APPLIED SCIENCES 40476 D\"USSELDORF, GERMANY.}\\
E-mail adress: sasa.novakovic@hs-fresenius.de\\
%\noindent
%{\tiny MATHEMATISCHES INSTITUT, HEINRICH--HEINE--UNIVERSIT\"AT 40225 D\"USSELDORF, GERMANY.}\\
%E-mail adress: novakovic@math.uni-duesseldorf.de

\end{document}